\newtheorem{theorem}{Theorem}[section]
\theoremstyle{remark}
\newtheorem*{remark}{Remark}
\begin{document}

\title{New formulas for the Riemann Zeta Function}

\author[A. Akula \and G. Hiary]{Aditya Akula \and Ghaith Hiary}

\address{
    AA: Georgia Institute of Technology, 336705 Georgia Tech Station, Atlanta, GA 30332, USA.
}
\email{akula.aditya@gmail.com}
\address{
    GH: Department of Mathematics, The Ohio State University, 231 West 18th
    Ave, Columbus, OH 43210, USA
}
\email{hiary.1@osu.edu}
\subjclass[2010]{Primary: 11Y35, 11M06.}
\keywords{The Riemann zeta function, Dirichlet series}

\maketitle

\begin{abstract}
    A new method
    for continuing the usual Dirichlet series that defines the Riemann
    zeta function $\zeta(s)$ is presented. Numerical
    experiments demonstrating the computational efficacy of the resulting
    continuation are discussed.
\end{abstract}

\section{Introduction}
The usual Dirichlet series defining the Riemann Zeta Function $\zeta(s)$ is   
\begin{equation}\label{zeta_dirichlet}
\zeta(s) = \sum_{n=1}^{\infty} n^{-s},
\end{equation}
which converges in the half-plane $\operatorname{Re}(s) >1$ only. We construct a
new family of
linear combinations of subsums of \eqref{zeta_dirichlet} 
along arithmetic progressions to achieve convergence in arbitrarily large half-planes.

To this end, let $m$ be a positive integer, and let $d(m)$ denote the number of
positive integer divisors of $m$. Our method enables continuing $\zeta(s)$ to
the larger half-plane $\operatorname{Re} (s)>2-d(m)$, provided $d(m) \geq 4$. 

An essential ingredient of our method is certain Dirichlet series weights $b_k$.
These weights are $m$-periodic, so $b_{k+m}=b_k$ for all $k\ge 1$, and may be
found on demand by a routine calculation of nonzero vectors in the kernel of a certain $d(m)\times d(m)$ singular matrix $A$. 
Higher values of $m$ with more divisors $d(m)$ allow for more complicated combinations and a larger half-plane of convergence. 

The formulas we present may be reminiscent of the well-known formula \cite[p. 16]{Titchmarsh},
\begin{equation}\label{eta}
\zeta(s)\left(1-\frac{2}{2^s} \right)=
\sum_{n=0}^{\infty} {\left(\frac{1}{(2n+1)^s} - \frac{1}{(2n+2)^s}\right)},
\end{equation}
which is convergent for $\operatorname{Re}(s)>0$. Our original motivation was,
in fact, to generalize \eqref{eta} by considering 
analogies with numerical differentiation formulas. 

Let $D$ denote the set of positive divisors of $m$, and label the elements of
$D$ in increasing order, so $d_1 = 1$ and $d_{d(m)} = m$. These will be used to
separate the integers from $1$ to $m$ into subsets according to $\gcd(k,m)$ for
$1 \leq k\leq m$. Specifically, in section \ref{derivation}, we will construct
formulas of the form 
\begin{equation}\label{mainformula}
    \zeta(s) \cdot \left( \sum_{j=1}^{d(m)} \frac{a_j}{(d_j)^s} \right) =
    \sum_{n=0}^{\infty} \sum_{k=1}^{m} \frac{b_k}{(mn+k)^s},
\end{equation}
such that the right-side is convergent in the half-plane $\operatorname{Re}(s) >
2- d(m)$. Note that the series on the right-side of \eqref{mainformula} is related to
$\zeta(s)$ in a simple way, as one may divide by the finite sum over $j$ on the
left-side (provided that sum is nonzero) to arrive at zeta. 

In section \ref{numerical_experiments}, 
we also study the error resulting from truncating 
the sum over $n$ in \eqref{mainformula} at some $n=N$. 
An interesting finding concerns the convergence behavior on the
critical line. 
Figure~\ref{conv_rate} displays a typical example, illustrating
plateau-decay behavior, initially, and dramatic improvements in accuracy for some choices of the truncation
point $N$.

\begin{figure}[h]
    \centering
    \includegraphics[scale=0.45]{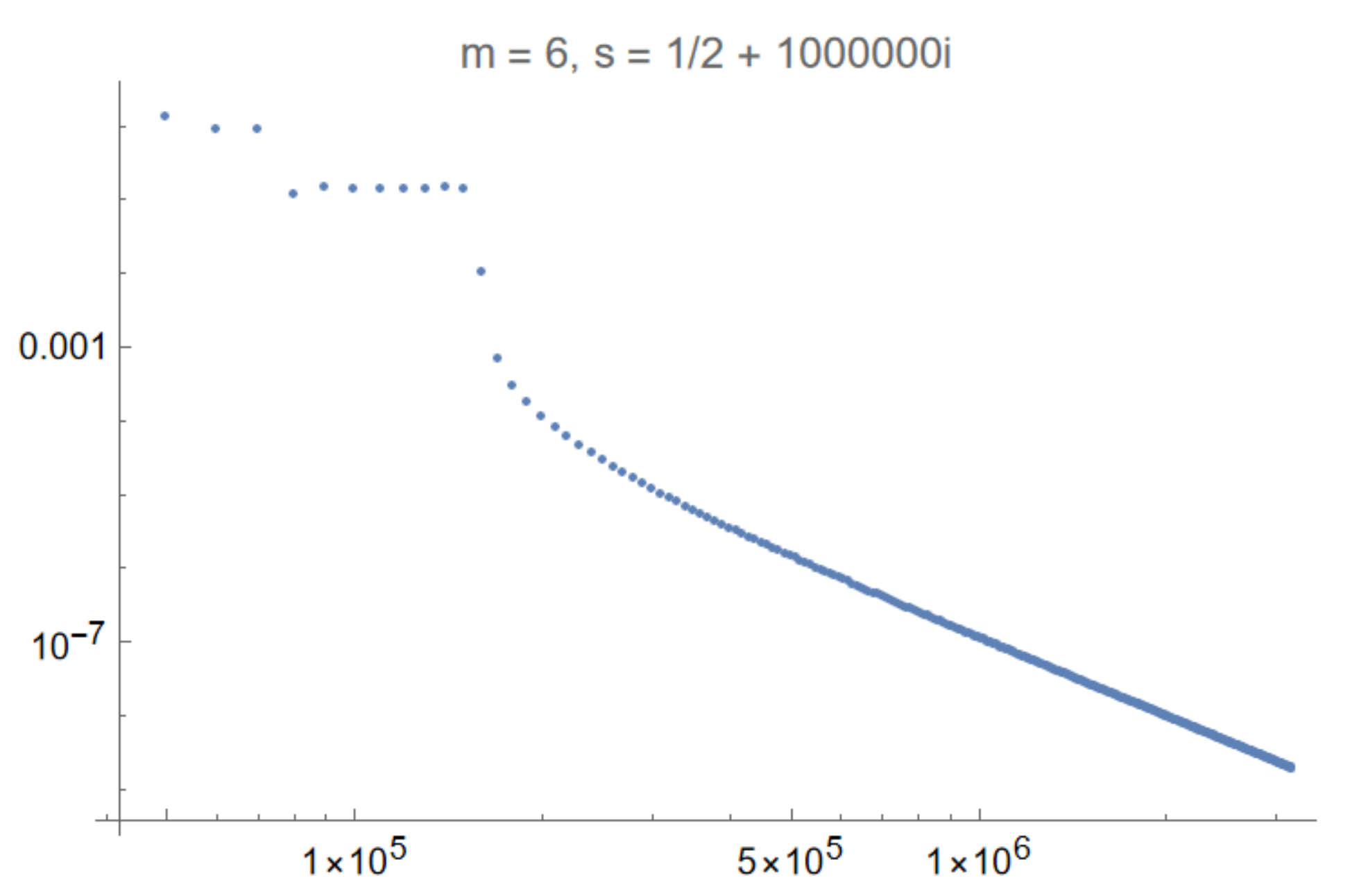}
    \includegraphics[scale=0.45]{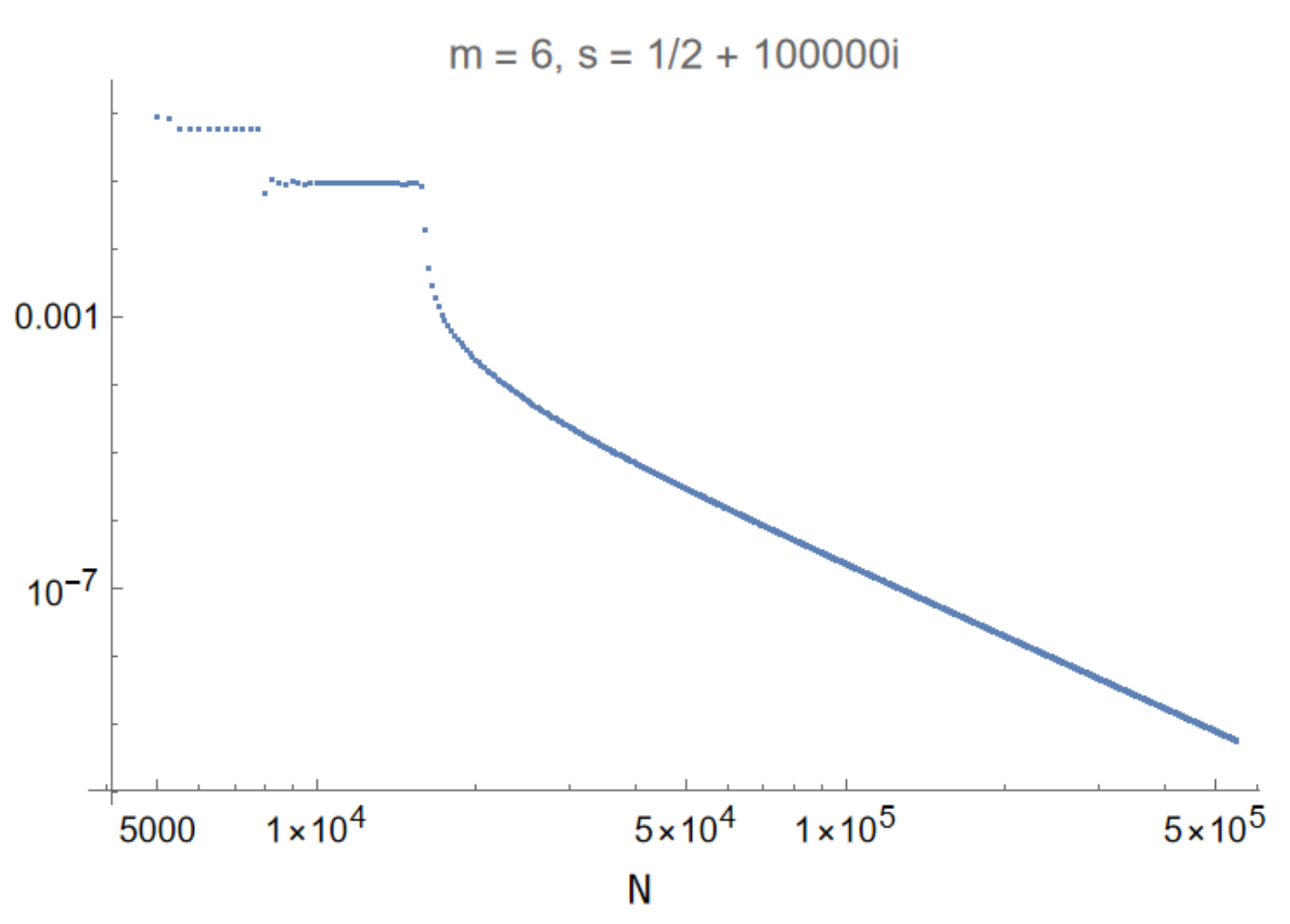}
    \caption{\small $\log$-$\log$ plots of the truncation error
    against the number of terms $N$, where we used the formula
    \eqref{mainformula}
     corresponding to the vector 
    $\boldsymbol{a}=[1,-5,5,-1]^T$. The range of $N$ 
    in the first graph is from $5\cdot 10^4$ to $3.15\cdot 10^6$, going in
    increments of $1000$. The range of $N$ in the second graph 
    is from $5\cdot 10^3$ to $5.5\cdot 10^6$, going in
    increments of $250$.} 
    \label{conv_rate}
\end{figure}

\section{Derivation}\label{derivation}
\subsection{Overview}
Our goal is to determine coefficients $a_1,\ldots,a_{d(m)}$ and $b_1,\ldots,
b_m$ such that formula \eqref{mainformula} holds and such that the right-side in
the formula converges in the half-plane $\Re(s) > 2 - d(m)$. 

Multiplying the Dirichlet series in \eqref{zeta_dirichlet} term-by-term by $1/k^s$ ``filters'' the terms in the series to only those divisible by $k$. We will use this, multiplying $\zeta(s)$ by $a_j/d_j^s$ for some set of coefficients $a_j$, where the $d_j$ are in $D$. As will be explained later, the coefficients $a_1,\ldots, a_{d(m)}$ will in turn be used to determine the coefficients $b_1,\ldots, b_m$. 
This will result in a formula of the form \eqref{mainformula} with the
right-side convergent for $\operatorname{Re}(s)>2-d(m)$, which is 
far beyond the original half-plane of convergence. 

Let first motivate the conditions we intend to impose on the $b_k$'s.
Consider the outcome on 
substituting non-positive integer values of $s$ into the inner sum in \eqref{mainformula}. In
this case, the inner sum simplifies to a polynomial in $n$. Summing the values
of this polynomial over $n$ gives a divergent sum unless the polynomial in
question is identically zero. For instance, if $m=6$, then $d(m)=4$ and
the inner sum in formula \eqref{mainformula} becomes

\begin{equation}\label{m6example}
b_1 (6n+1)^{-s} + b_2 (6n+2)^{-s} + \cdots+ b_6 (6n+6)^{-s}.   
\end{equation}
\newline
We would like this to be identically zero 
when $s=0,-1,-2, -3$, if possible. It is enlightening to consider what conditions on
the $b_k$'s arise as we progressively impose these requirements.
When $s=0$ and $n\ge 0$, the expression \eqref{m6example} turns into 
$$b_1 + b_2+\cdots + b_6.$$
To ensure that this expression is identically zero, we need $b_1 + b_2 +\cdots+ b_6 = 0$. When $s=-1$, the expression \eqref{m6example} turns into the following polynomial in $n$.
$$6(b_1 + b_2+\cdots + b_6)n + (b_1 + 2b_2 +\cdots+ 6b_6).$$ 
So, in view of our earlier requirement that $b_1 + b_2 +\cdots+ b_6 = 0$, to
ensure that this last polynomial is identically zero we just need $b_1 + 2b_2 +\cdots+ 6b_6=0$. Next, when $s=-2$, the sum \eqref{m6example} turns into
$$36(b_1 + b_2+\cdots + b_6)n^2 + 12(b_1 + 2b_2 +\cdots+ 6b_6)n+(b_1+2^2b_2+\cdots+6^2b_6).$$ 
So to ensure that this new polynomial in $n$ is identically zero, we just need
$b_1 + 2^2b_2 +\cdots+ 6^2 b_6=0$. Lastly, when $s=-3$, we obtain one additional condition that $b_1 + 2^3b_2 +\cdots+ 6^3 b_6=0$. 

In summary, we obtain a system of equations in $b_1,\ldots,b_6$. This system can
be represented by a matrix $B$ of dimension $d(m)\times m = 4\times 6$. (The
matrix $B$ is different from the matrix $A$ mentioned in the introduction.)
Hence, we just need to find nonzero vectors in the kernel of $B$. We find that a
basis for the kernel of $B$ is given by
$$b_1=4,\quad b_2=-15,\quad b_3=20,\quad b_4=-10,\quad b_5=0,\quad b_6=1$$ 
and
$$b_1=1,\quad b_2=-4,\quad b_3=6,\quad b_4=-4,\quad b_5=1,\quad b_6=0.$$ 
Either of the vectors determined by these values of the $b_k$'s will accomplish
the desired goal, as will any other nonzero vector in the kernel of $B$. 

In general, as will transpire in the next subsection, for the double-sum in formula \eqref{mainformula} to be
convergent in the larger half-plane $\operatorname{Re}(s)>2-d(m)$,  it is enough
to ensure that
\begin{equation}\label{bkequation}
    \sum_{k=1}^{m} b_k \cdot k^{-s}=0\qquad \qquad s=0,-1,\ldots,1-d(m).
\end{equation}
Briefly, this is because for $s=0,-1,\ldots,1-d(m)$, 
if we expand $b_k (mn  + k)^{-s}$ as a polynomial in $n$ using
the binomial theorem and sum across all $k$, then we can group the resulting terms by
power of $n$. The coefficients of these powers of $n$ are of the form
\eqref{bkequation}. Thus, in seeking convergence in the half-plane
$\operatorname{Re}(s)>2-d(m)$, we will 
want the coefficient of each power of $n$ to be $0$. 

However, even under these conditions, we cannot yet conclude that the right-side
of \eqref{mainformula} converges in the half-plane
$\operatorname{Re}(s)>2-d(m)$. This is because each inner sum on the right-side
of \eqref{mainformula} must be taken in full and cannot be truncated. So we
cannot directly apply the well-known theorem that if a Dirichlet series
converges at $s = x_0 + iy_0$, then it converges in the half plane
$\operatorname{Re} (s)>x_0$ and is analytic in that half-plane; see
\cite[Theorem 11.12]{Apostol} for an example statement of the said theorem. 

\subsection{The $b_k$ coefficients and convergence}\label{convval}
Let us denote the right-side of \eqref{mainformula} by $Z_m(s)$.
We prove the following.

\begin{theorem}\label{maintheorem}
If the coefficients $b_1,\ldots,b_m$ satisfy the conditions 
    in \eqref{bkequation}, then $Z_m(s)$ converges in the half-plane
    $\operatorname{Re}(s)>2-d(m)$ and is analytic there. 
\end{theorem}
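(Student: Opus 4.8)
Setting $f_n(s):=\sum_{k=1}^m b_k(mn+k)^{-s}$, we have $Z_m(s)=\sum_{n\ge 0}f_n(s)$ by definition, and the plan is to show that this series of functions converges absolutely and uniformly on every compact subset of $\{\operatorname{Re}(s)>2-d(m)\}$. Each $f_n$ is a finite linear combination of the entire functions $s\mapsto(mn+k)^{-s}=e^{-s\log(mn+k)}$, hence entire, so locally uniform convergence will give both the asserted convergence of $Z_m$ and, by the standard fact that a series of holomorphic functions converging uniformly on compacta has a holomorphic sum, its analyticity. The single term $f_0$ is itself entire and may be discarded, so everything reduces to a good estimate for $f_n(s)$ when $n\ge 1$.

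For $n\ge 1$, the idea is to Taylor-expand $x\mapsto(mn+x)^{-s}$ about $x=0$ to order $d(m)$, using the \emph{integral} form of the remainder (the Lagrange form is unavailable because the function is complex-valued). Evaluating at $x=k$, multiplying by $b_k$, and summing over $1\le k\le m$, the polynomial part collapses: its $j$-th term is a constant multiple of $\binom{-s}{j}(mn)^{-s-j}\sum_{k=1}^m b_k k^{j}$ for $0\le j\le d(m)-1$, and $\sum_k b_k k^{j}=0$ is exactly condition \eqref{bkequation} at $s=-j$. Only the remainders survive, and since the $d(m)$-th $x$-derivative of $(mn+x)^{-s}$ equals $(-1)^{d(m)}s(s+1)\cdots(s+d(m)-1)(mn+x)^{-s-d(m)}$, this yields
\begin{equation*}
f_n(s)=(-1)^{d(m)}\,s(s+1)\cdots(s+d(m)-1)\sum_{k=1}^m b_k\int_0^{k}\frac{(k-t)^{d(m)-1}}{(d(m)-1)!}\,(mn+t)^{-s-d(m)}\,dt .
\end{equation*}
No interchange of infinite sums occurs here; this is a purely finite identity for each fixed $n$ and $s$.

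The remaining step is to bound this. On $\{\operatorname{Re}(s)>2-d(m)\}$ one has $\operatorname{Re}(s)+d(m)>2>0$, so the exponent $-\operatorname{Re}(s)-d(m)$ is negative and, for $t\in[0,k]\subseteq[0,m]$ and $n\ge1$, $|(mn+t)^{-s-d(m)}|=(mn+t)^{-\operatorname{Re}(s)-d(m)}\le(mn)^{-\operatorname{Re}(s)-d(m)}$. Pushing this through the integral (and using $\int_0^k(k-t)^{d(m)-1}\,dt/(d(m)-1)!=k^{d(m)}/d(m)!$) gives
\begin{equation*}
|f_n(s)|\ \le\ C(s)\,(mn)^{-\operatorname{Re}(s)-d(m)},\qquad C(s):=\frac{|s(s+1)\cdots(s+d(m)-1)|}{d(m)!}\sum_{k=1}^m|b_k|\,k^{d(m)} ,
\end{equation*}
where $C$ is continuous, hence bounded on compact sets. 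Given a compact $K\subset\{\operatorname{Re}(s)>2-d(m)\}$, pick $\sigma_K>2-d(m)$ with $\operatorname{Re}(s)\ge\sigma_K$ on $K$; then $\sigma_K+d(m)>2>1$, so on $K$ the tail $\sum_{n\ge 1}|f_n(s)|$ is dominated by a fixed constant times $\sum_{n\ge1}n^{-\sigma_K-d(m)}<\infty$. This establishes absolute and uniform convergence of $\sum_{n\ge0}f_n=Z_m$ on $K$, and hence the theorem.

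I do not expect a serious obstacle: the computation above is the rigorous counterpart of the binomial-expansion heuristic already sketched in the overview, and \eqref{bkequation} is designed exactly to cancel the first $d(m)$ terms. The only genuinely delicate points are (i) using the integral form of the remainder so the estimate is valid for complex $s$, and --- more importantly --- (ii) keeping the per-index bound $|f_n(s)|\le C(s)(mn)^{-\operatorname{Re}(s)-d(m)}$ uniform in $s$ on compacta, since it is this uniformity that upgrades pointwise convergence of $Z_m(s)$ to analyticity. I would also remark that this same estimate in fact delivers convergence and analyticity on the larger half-plane $\operatorname{Re}(s)>1-d(m)$, so the stated range $\operatorname{Re}(s)>2-d(m)$, while already far beyond the line $\operatorname{Re}(s)=1$, is not the best the method gives.
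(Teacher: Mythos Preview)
Your proof is correct, and the route you take is genuinely different from the paper's. The paper centers at $mn+m/2$, writes $(mn+k)^{-s}=(mn+m/2)^{-s}(1+z)^{-s}$ with $z=\frac{k-m/2}{mn+m/2}$, expands $(1+z)^{-s}$ as an \emph{infinite} power series, applies the cancellation \eqref{bkequation} to kill the terms with $\ell<d(m)$, and then bounds the full tail $\ell\ge d(m)$ using $|z|\le 3/4$ and the polynomial growth of the generalized binomial coefficients $f_\ell(s)$. You instead expand $(mn+x)^{-s}$ about $x=0$ to \emph{finite} order $d(m)$ with the integral remainder, so no infinite-series bookkeeping is needed: the cancellation kills the polynomial part outright, and a single pointwise bound $(mn+t)^{-\operatorname{Re}(s)-d(m)}\le(mn)^{-\operatorname{Re}(s)-d(m)}$ on $[0,m]$ finishes the estimate. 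Your argument is shorter and, as you note, directly yields the larger half-plane $\operatorname{Re}(s)>1-d(m)$ that the paper only suggests in the remark following the theorem. The paper's expansion, on the other hand, isolates the leading surviving term \eqref{nonzeroterm} explicitly, which feeds into the later error analysis in \S\ref{numerical_experiments}; your integral remainder could be made to do the same with one more Taylor step, but the paper's series makes that structure more visible.
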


\begin{proof}
Suppose initially that $\operatorname{Re}(s)>1$, so $Z_m(s)$ is absolutely
convergent. We use the Taylor expansion to re-express the inner sums in
$Z_m(s)$. We may restrict our analysis to inner sums with $n\ge n_0 \ge 3$, say.
 This restriction does not pose a problem for analysis of
convergence since $n=0,1,\ldots,n_0-1$ correspond to a finite subsum. 
So, let us write
\begin{equation}
    \begin{split}
\frac{1}{(mn+k)^s}&=\frac{1}{(mn+m/2+(k-m/2))^s}\\
&= \frac{1}{(mn+m/2)^s}\cdot \left.\frac{1}{(1+z)^s}\right|_{z=\frac{k-m/2}{mn+m/2}}.
    \end{split}
\end{equation}
We expand $(1+z)^{-s}$ in powers of $z$.
\begin{equation}\label{z_expansion}
    (1+z)^{-s} = \sum_{\ell=0}^{\infty} f_{\ell}(s)z^{\ell}.
\end{equation}
So, $f_0(s)=1$, $f_1(s)=-s$, $f_2(s) = s(s+1)/2$, and in general 
\begin{equation}\label{fl_formula}
f_{\ell}(s) = \frac{(-1)^{\ell}}{\ell!} \prod_{u=0}^{\ell-1}{(s+u)}.
\end{equation}

    The coefficients $f_{\ell}(s)$ grow at most like a polynomial in $\ell$. 
More explicitly, we have $f_{\ell}(s)\ll \ell^{|s|+1}$, as can be
seen with the aid of basic properties of the $\Gamma$-function; see \cite[p.
    73]{davenport-book}, for example.
(If $s$ is a nonpositive integer, then the $f_{\ell}(s)$ are eventually
all zero.)
    Therefore, if $|z|<1$, 
then the exponential decay due to $|z|^{\ell}$ will dominate the polynomial growth
    in $f_{\ell}(s)$ in the expansion \eqref{z_expansion}. Consequently, if $|z|<1$, then the expansion \eqref{z_expansion} converges absolutely for any
value of $s$.

    Since $n\ge n_0\ge 3$ and $1\le k\le m$,
and considering that we plan to take 
$z=(k-m/2)/(mn+m/2)$, we see that the condition $|z|<1$ is satisfied in our
case. Therefore, we obtain 
\begin{equation}\label{taylorexpansion}
\sum_{n=n_0}^{\infty} \sum_{k=1}^{m} \frac{b_k}{(mn+k)^s}=
\sum_{n=n_0}^{\infty} \sum_{k=1}^m b_k  \sum_{\ell=0}^{\infty}
    \frac{f_{\ell}(s)}{(mn + m/2)^{s+\ell}} (k-m/2)^{\ell},
\end{equation}
where the sum over $\ell$ converges absolutely for any $s$.

    Applying the binomial theorem to the $(k-m/2)^{\ell}$ term in
    \eqref{taylorexpansion}, and then 
interchanging the order of summation in the 
    absolutely convergent double-sum over $\ell$ and $k$, 
and finally grouping the resulting terms by degree, gives that
the right-side in \eqref{taylorexpansion} is equal to
\begin{equation} \label{taylorexpansion2}
    \sum_{n=n_0}^{\infty} \sum_{\ell=0}^{\infty} \frac{f_{\ell}(s)}{(mn + m/2)^{s+\ell}}\sum_{r=0}^{\ell} \binom{\ell}{r} (-m/2)^{\ell-r} \sum_{k=1}^m b_k k^r.
\end{equation}
We now appeal to the conditions \eqref{bkequation}; namely, that for each integer $r$ satisfying $0 \leq r \leq d(m) - 1$,
\begin{equation}\label{bkvanishing}
\sum_{k=1}^{m} b_k k^{r}=0. 
\end{equation}
Using this, we see that 
    the sum over $r\in [0,\ell]$ in \eqref{taylorexpansion2} vanishes if $\ell < d(m)$.
Therefore, the expression in \eqref{taylorexpansion2} is equal to
\begin{equation} \label{taylorexpansion3}
    \sum_{n=n_0}^{\infty} \sum_{\ell=d(m)}^{\infty} \frac{f_{\ell}(s)}{(mn + m/2)^{s+\ell}}\sum_{r=d(m)}^{\ell} \binom{\ell}{r} (-m/2)^{\ell-r} \sum_{k=1}^m b_k k^r.
\end{equation}

We claim the the sum \eqref{taylorexpansion3} is absolultey convergent for 
any $s$ in the half-plane $\operatorname{Re}(s) + d(m) > 2$, 
and hence provides the desired continuation of $Z_m(s)$.
For by the geometric-arithmetic mean inequality, 
and since $n\ge n_0\ge 3$,
\begin{equation}
    mn+\frac{m}{2} = m(n-n_0+1)+\left(n_0-\frac{1}{2}\right)m \ge
    2m\sqrt{n-n_0+1}.
\end{equation}
Thus, for $\ell\ge d(m)$ and $\operatorname{Re}(s)+d(m) >0$,
\begin{equation}
    \left(mn+\frac{m}{2}\right)^{\operatorname{Re}(s)+\ell} \ge 
    (n-n_0+1)^{\frac{\operatorname{Re}(s)+d(m)}{2}}
    (2m)^{\operatorname{Re}(s)+\ell}. 
\end{equation}
Hence, by the triangle inequality, 
the sum in \eqref{taylorexpansion3} is bounded in size by
\begin{equation}
    \begin{split}
        & \zeta\left(\frac{\operatorname{Re}(s)+d(m)}{2}\right) 
        \sum_{\ell=d(m)}^{\infty}
        \frac{|f_{\ell}(s)|}{(2m)^{\operatorname{Re}(s)+\ell}}\sum_{r=d(m)}^{\ell}
    \binom{\ell}{r} (m/2)^{\ell-r} \sum_{k=1}^m |b_k| k^r \\
        & \le \zeta\left(\frac{\operatorname{Re}(s)+d(m)}{2}\right) 
         \sum_{k=1}^m |b_k| \sum_{\ell=d(m)}^{\infty}
        |f_{\ell}(s)|
        \frac{(k+m/2)^{\ell}}{(2m)^{\operatorname{Re}(s)+\ell}}\\
        & \le \zeta\left(\frac{\operatorname{Re}(s)+d(m)}{2}\right) 
         \left(\sum_{k=1}^m |b_k|\right) \sum_{\ell=d(m)}^{\infty}
        |f_{\ell}(s)| \left(\frac{3}{4}\right)^{\ell},
    \end{split}
\end{equation}
and, as pointed out earlier, the sum over $\ell$ converges absolutely for any
$s$. Hence, as claimed, 
the sum in \eqref{taylorexpansion3} converges absolutely
in the half-plane $\operatorname{Re}(s) + d(m) > 2$, and 
is therefore analytic in that half-plane.
\end{proof}

\begin{remark}
In view of \eqref{bkequation}, 
the first nonzero term in the sum over $\ell$ in \eqref{taylorexpansion3} is 
\begin{equation}\label{nonzeroterm}
    \frac{f_{d(m)}(s)}{(mn +m/2)^{s+d(m)}}\sum_{k=1}^{m} b_k k^{d(m)}.
\end{equation}
All subsequent terms have exponents with larger real part than
    $\operatorname{Re}(s)+d(m)$ in the denominator. 
    So it is possible that convergence occurs in the larger half-plane
    $\operatorname{Re}(s) > 1-d(m)$. Numerical experiments that we carried
    out seem consistent with this. 
\end{remark}

\subsection{Solving for the $a_j$ coefficients}
We now consider the left-side of the formula \eqref{mainformula}. For $\operatorname{Re} (s)>1$, we have
\begin{equation}
    \begin{split}
    \zeta(s) \cdot \left( \sum_{j=1}^{d(m)} \frac{a_j}{(d_j)^s} \right) & = \sum_{j=1}^{d(m)}\left(\frac{a_j}{(d_j)^s} + \frac{a_j}{(2d_j)^s} +\frac{a_j}{(3d_j)^s}\cdots \right) \\ 
    & = \frac{a_1}{1^s} + \frac{\sum_{d_j \, | 2} a_j}{2^s} + \frac{\sum_{d_j \,
        | 3} a_j}{3^s}\cdots, 
    \end{split}
\end{equation}
where we used absolute convergence to rearrange the sum. On the other hand, the quantity 
$$\sum_{d_j \, | h} a_j, \qquad h \geq 1$$ 
satisfies 
$$\sum_{d_j \, | h} a_j = \sum_{d_j \, | h + m} a_j.$$ Thus, this quantity is
periodic with period $m$. So, by absolute convergence, we may rearrange the sum
and write
\begin{equation}\label{ajzeta}
    \zeta(s)\sum_{j=1}^{d(m)} \frac{a_j}{(d_j)^s}=\sum_{n=0}^{\infty}
    \sum_{k=1}^{m} \frac{b_k}{(mn+k)^s},  
\end{equation}
where
\begin{equation}\label{bkaj}
    b_k = \sum_{d_j \, | k} a_j, \qquad\qquad k=1,\ldots,m.
\end{equation}
Therefore, each $b_k$ is the sum of the $a_j$ with the property that the $j$-th
divisor of $m$ divides $k$. Hence, in terms of the $a_j$, 
the conditions \eqref{bkequation} read
\begin{equation}\label{Aij_entries}
    \sum_{k=1}^m \sum_{d_j\, | k} a_j k^r =0,\qquad \qquad r=0,\ldots,d(m)-1.
\end{equation}

For example, when $m=6$, the coefficient $b_1$ of the $(6n+1)^{-s}$ term is
equal to $a_1$, because only  $d_1=1$ divides $k = 1$ and so only $a_1$
contributes to $b_1$. In another case, the
coefficient $b_3$ of the $(6n+3)^{-s}$ term is $a_1 + a_3$, as only $d_1$ and
$d_3$ (which equal $1$ and $3$, respectively) divide $k = 3$.
Put together, we can easily compute that
$$b_1 = a_1,\quad b_2 = a_1 + a_2,\quad b_3 = a_1 + a_3,\quad b_4 = a_1+a_2.$$
Substituting these back into the condition \eqref{bkequation}, we get 
$$\frac{a_1}{1^s} + \frac{a_1 + a_2}{2^s} + \frac{a_1 + a_3}{3^s} + \frac{a_1 + a_2}{4^s} + \frac{a_1}{5^s} + \frac{a_1 + a_2 + a_3 + a_4}{6^s} = 0.$$
Thus, rearranging, we get 
$$a_1\left(\frac{1}{1^s} + \cdots + \frac{1}{6^s}\right) + a_2\left(\frac{1}{2^s} +\frac{1}{4^s} +\frac{1}{6^s}\right) + a_3\left(\frac{1}{3^s} +\frac{1}{6^s}\right) + a_4\left(\frac{1}{6^s}\right) = 0.$$ 
We want the last equation
to hold for each $s = 0, -1, -2, -3$. This results in a linear system
represented by a $4$-dimensional square matrix $A$ and we want to solve the matrix equation 
$A\boldsymbol{a} = \boldsymbol{0}$, where  $\boldsymbol{a}=[a_1,a_2,a_3,a_4]^T$.
We will later show that 
if $d(m)\ge 4$ then $A$ is singular, 
and so we will obtain a nonzero solution for $\boldsymbol{a}$, and consequently for the $b_k$'s. 

In summary, we construct a $d(m)$-dimensional square matrix 
$A=(A_{ij})$, where $1\le i,j\le d(m)$. The  entries of $A$ are given by
\begin{equation}\label{Aijformula}
A_{ij} = \sum_{n=1}^{\frac{m}{d_j}} (d_j \cdot n)^{i-1}.
\end{equation}
The formula \eqref{Aijformula} arises from 
the linear contraints imposed in \eqref{Aij_entries}. 
Moreover, in view of the findings in the next subsection, 
 provided $d(m)\ge 4$, we can find a nonzero vector 
$$\boldsymbol{a}=[a_1,\ldots,a_{d(m)}]^T$$ 
such that 
$$A\boldsymbol{a} = \boldsymbol{0}.$$ 
Given such a vector $\boldsymbol{a}$, we can solve for the $b_k$ using
\eqref{bkaj} and hence obtain a formula of the form 
\begin{equation}\label{zetaformula}
    \zeta(s) \cdot \left( \sum_{j=1}^{d(m)} \frac{a_j}{(d_j)^s} \right) =
    Z_m(s).
\end{equation}
Although this formula was derived for $\operatorname{Re}(s)>1$,
it follows from 
Theorem~\ref{maintheorem} that the equality holds by analytic continuation
throughout the half-plane $\operatorname{Re}(s)>2-d(m)$.

\subsection{Singularity of $A$}\label{singularity}
We show that $A$ is singular, 
so the kernel of $A$ is nonzero.
Suppose $m$ is such that $d(m)\ge 4$. Then $A$ has at least $4$
columns and rows. 
We claim that the nonzero vector 
$$
\boldsymbol{c}=
\begin{bmatrix}
0,
m^2,
-3m,
2,
0,
0,
\cdots,
0
\end{bmatrix}
$$
is in the left-kernel of the $A$. This will follow on
 showing that for each $j=1,\ldots,m$,
the dot product of $\boldsymbol{c}$ with the $j$-th column of
$A$, denoted by $A_j$, is zero. 

Using the formula \eqref{Aijformula},
the 2nd, 3rd, and 4th entries of the $j$-th column $A_j$ are given by 
\begin{equation}
    \begin{split}
        A_{2j} &= \sum_{n=1}^{\frac{m}{d_j}} (d_j \cdot n) = d_j\cdot
        \frac{1}{2}\cdot \frac{m}{d_j}\left(\frac{m}{d_j} + 1\right) =
        \frac{m(m+d_j)}{2d_j},\\
        A_{3j} &= \sum_{n=1}^{\frac{m}{d_j}} (d_j \cdot n)^2 =
        \left(d_j\right)^2\cdot \frac{1}{6}\cdot
        \frac{m}{d_j}\left(\frac{m}{d_j}+1\right)\left(\frac{2m}{d_j} + 1\right)
        = \frac{m(m+d_j)(2m+d_j)}{6d_j},\\
        A_{4j} &= \sum_{n=1}^{\frac{m}{d_j}} (d_j \cdot n)^3 = (d_j)^3 \cdot\frac{1}{4}\cdot\left(\frac{m}{d_j}\right)^2\left(\frac{m}{d_j}+1\right)^2 = \frac{m^2(m+d_j)^2}{4d_j}.
    \end{split}
\end{equation}
Thus, taking a dot product $\boldsymbol{c}\cdot A_j$ we get 
\begin{equation}
\begin{split}
\boldsymbol{c}\cdot A_j&=\frac{m^3(m+d_j)}{2d_j} - \frac{m^2(m+d_j)(2m+d_j)}{2d_j} + \frac{m^2(m+d_j)^2}{2d_j}\\
&= \frac{m(m+d_j)}{2d_j}(m^2 - m(2m + d_j) + m(m+d_j)) = 0.
\end{split}
\end{equation}
So the left-kernel of the square matrix $A$ is nonzero, and so $A$ must be singular.

For the cases where $d(m) < 4$, we do find that the generated matrix is nonsingular. The two cases to consider are $m=p$ for $p$ prime, which gives $d(m) = 2$, and $m = p^2$ for $p$ a prime, which gives $d(m) = 3$. In the first case, the matrix $A$ reduces to 
$$
A=
\begin{bmatrix}
p&1\\
\frac{p(p+1)}{2}&p
\end{bmatrix},
$$
which is nonsingular. In the second case, the matrix reduces to 
$$
A=
\begin{bmatrix}
p^2&p&1\\
\frac{p^2(p^2+1)}{2}&p\frac{p(p+1)}{2}&p^2\\
\frac{p^2(p^2 + 1)(2p^2 + 1)}{6}&p^2\frac{p(p+1)(2p+1)}{6}&p^4
\end{bmatrix}.
$$
Taking the determinant, we get 
$\det(A) = 
\frac{p^8}{12} - \frac{p^7}{6} + \frac{p^5}{6} - \frac{p^4}{12}$. This is 
$$\frac{p^4}{12}(p^4 - 2p^3 + 2p - 1) = \frac{p^4}{12}(p^2 - 1)(p^2 -2p + 1).$$ 
So again, the determinant is nonzero. 
Thus, if $d(m)<4$, the generated matrix is nonsingular, and our method is not
applicable. For example, the formula \eqref{eta} falls outside the scope of our
method. But if $d(m)\ge 4$, the generated matrix $A$ 
will be singular and our method works. 

\section{Example with $m=24$}
To clarify each step, we provide an example when $m = 24$. In this case, $d(m) =
8$ and $D = \{1,2,3,4,6,8,12,24\}$. So $d_1=1, d_2=2,\ldots,d_8=24$. 
Using the formula \eqref{Aijformula} for $A_{ij}$ we find that 
our $d(m)$-dimensional, or $8$-dimensional, matrix $A=(A_{ij})$ is given by
$$A = \begin{bmatrix}
    24 & 12 & 8 & \dots & 1\\
    300 & 156 & 108 & \dots & 24 \\
    4900 & 2600 & 1836 & \dots & 576 \\
    \vdots & \vdots & \vdots & \ddots & \vdots \\
    {\sum_{n=1}^{24} (n)^{7}} & {\sum_{n=1}^{12} (2n)^{7}} & {\sum_{n=1}^{8} (3n)^{7}} & \dots & (24n)^7 \\
\end{bmatrix}
$$
Using a computer algebra system to compute the so-called 
row-reduced echelon form of this matrix, see \cite{linalg} for example, the result is 
$$\begin{bmatrix}
    1 & 0 & 0 & 0 & 0 & 3 & 56 & 2761 \\
    0 & 1 & 0 & 0 & 0 & {\frac{91}{4}} & -407 & {\frac{-78085}{4}}\\
    0 & 0 & 1 & 0 & 0 & 47 & 792 & 36685\\
    0 & 0 & 0 & 1 & 0 & {\frac{-67}{2}} & -517 & {\frac{-45793}{2}}\\
    0 & 0 & 0 & 0 & 1 & {\frac{29}{4}} & 77 & {\frac{11891}{4}}\\
    0 & 0 & 0 & 0 & 0 & 0 & 0 & 0 \\
    0 & 0 & 0 & 0 & 0 & 0 & 0 & 0 \\
    0 & 0 & 0 & 0 & 0 & 0 & 0 & 0 \\
\end{bmatrix}$$
Thus, one of the nonzero vectors in the kernel is 
$$
\boldsymbol{c}=
\begin{bmatrix} 
56 & -407 & 792 & -517 & 77 & 0 & -1 & 0 \\
\end{bmatrix}^T
$$
Matching this vector with the divisors $d_j$, in accordance with formula \eqref{mainformula}, we get 
$$\zeta(s) \left(56 -\frac{407}{2^s} + \frac{792}{3^s} - \frac{517}{4^s} + \frac{77}{6^s} - \frac{1}{12^s}\right) = Z_{24}(s)$$
To write down the series representation of $Z_{24}(s)$, we use
 formula \eqref{bkaj} to calcualte the $b_k$. For example, to calculate $b_k$
 when $k = 16$, we add the coefficients $a_j$ corresponding to the divisors
 $d_j\in D$ such that $d_j | 16$. Those divisors are $1,2,4$, and $8$, and the
 corresponding $a_j$ are $56, -407,-517$, and $0$. The resulting coefficient is therefore $b_{16}=56 - 407 - 517 + 0 = -868$. 
Repeating this process for every $b_k$, the end result is 
\begin{equation*}
\footnotesize{
    \begin{split}
Z_{24}(s)&=\sum_{n=0}^{\infty} \left(\frac{56}{(24n+1)^s} - \frac{351}{(24n+2)^s} + \frac{848}{(24n+3)^s} - \frac{868}{(24n+4)^s} + \frac{56}{(24n+5)^s}\right. \\
 &+ \frac{518}{(24n+6)^s} + \frac{56}{(24n+7)^s}
 - \frac{868}{(24n+8)^s} + \frac{848}{(24n+9)^s} - \frac{351}{(24n+10)^s} + \frac{56}{(24n+11)^s} \\
 &+ \frac{0}{(24n+12)^s} + \frac{56}{(24n+13)^s} 
 - \frac{351}{(24n+14)^s} + \frac{848}{(24n+15)^s} - \frac{868}{(24n+16)^s} + \frac{56}{(24n+17)^s}\\
 &+ \frac{518}{(24n+18)^s} + \frac{56}{(24n+19)^s}
 - \frac{868}{(24n+20)^s} + \frac{848}{(24n+21)^s} - \frac{351}{(24n+22)^s} + \frac{56}{(24n+23)^s} \\
 &\left.+ \frac{0}{(24n+24)^s}\right).
   \end{split}
}
\end{equation*}
\newline
Combining, we get 
$$\zeta(s) =  \frac{Z_{24}(s)}{56 -\frac{407}{2^s} + \frac{792}{3^s} - \frac{517}{4^s} + \frac{77}{6^s} - \frac{1}{12^s}},$$ 
whenever the denominator on the right-side is nonzero, and 
this converges for $\operatorname{Re}(s) > 2-d(m) = -6$. 

\section{Numerical experiments with $m=6,24,60$}\label{numerical_experiments}
Wolfram Mathematica was used to verify the accuracy of the generated Dirichlet Series and to check the rate of convergence. To do so, precise computed values of $\zeta(\frac{1}{2} + it)$ for $t = 10^4, 10^5, 10^6, 10^7$ were compared to the result of our method with $m=60$, $m = 24$, and $m=6$, as well as to the well-known continuation through the Dirichlet Eta function given in \eqref{eta}. 

We find the series representation corresponding to $m=60$ is given by
$ Z_{60}(s)=\sum_{n=0}^{\infty} (\frac{61768}{(60n+1)^s} - \frac{506228}{(60n+2)^s} + \frac{1657604}{(60n+3)^s} - \frac{2557849}{(60n+4)^s} + \frac{1354748}{(60n+5)^s} + \frac{754819}{(60n+6)^s} + \frac{61768}{(60n+7)^s} - \frac{2557849}{(60n+8)^s} + \frac{1657604}{(60n+9)^s} + \frac{791167}{(60n+10)^s} + \frac{61768}{(60n+11)^s} - \frac{1297395}{(60n+12)^s} + \frac{61768}{(60n+13)^s} - \frac{506228}{(60n+14)^s} + \frac{2950584}{(60n+15)^s} - \frac{2557849}{(60n+16)^s} + \frac{61768}{(60n+17)^s} + \frac{754819}{(60n+18)^s} + \frac{61768}{(60n+19)^s} - \frac{1260454}{(60n+20)^s} + \frac{1657604}{(60n+21)^s} - \frac{506228}{(60n+22)^s} + \frac{61768}{(60n+23)^s} - \frac{1297395}{(60n+24)^s}+
\frac{1354748}{(60n+25)^s} - \frac{506228}{(60n+26)^s} + \frac{1657604}{(60n+27)^s} - \frac{2557849}{(60n+28)^s} + \frac{61768}{(60n+29)^s} + \frac{2052214}{(60n+30)^s} + \frac{61768}{(60n+31)^s} - \frac{2557849}{(60n+32)^s} + \frac{1657604}{(60n+33)^s} - \frac{506228}{(60n+34)^s} + \frac{1354748}{(60n+35)^s} - \frac{1297395}{(60n+36)^s} + \frac{61768}{(60n+37)^s} - \frac{506228}{(60n+38)^s} + \frac{1657604}{(60n+39)^s} - \frac{1260454}{(60n+40)^s} + \frac{61768}{(60n+41)^s} + \frac{754819}{(60n+42)^s}+
\frac{61768}{(60n+43)^s} - \frac{2557849}{(60n+44)^s} + \frac{2950584}{(60n+45)^s} - \frac{506228}{(60n+46)^s} + \frac{61768}{(60n+47)^s} - \frac{1297395}{(60n+48)^s} + \frac{61768}{(60n+49)^s} + \frac{791167}{(60n+50)^s} + \frac{1657604}{(60n+51)^s} - \frac{2557849}{(60n+52)^s} + \frac{61768}{(60n+53)^s} + \frac{754819}{(60n+54)^s} + \frac{1354748}{(60n+55)^s} - \frac{2557849}{(60n+56)^s} + \frac{1657604}{(60n+57)^s} - \frac{506228}{(60n+58)^s} + \frac{61768}{(60n+59)^s} + \frac{0}{(60n+60)^s})$.
Therefore,
$$
\zeta(s) = \frac{Z_{60}(s)}{61768 - \frac{567996}{2^{s}} + \frac{1595836}{3^{s}} - \frac{2051621}{4^{s}} + 
 \frac{1292980}{5^{s}} - \frac{334789}{6^{s}} + \frac{4415}{10^{s}} - \frac{593}{12^{s}}}.
$$
The series representation obtained when $m=24$ was derived in the section prior, and is given by $$\frac{Z_{24}(s)}{56 -\frac{407}{2^s} + \frac{792}{3^s} - \frac{517}{4^s} + \frac{77}{6^s} - \frac{1}{12^s}}.$$ 
The series representation when $m=6$ is $$\frac{\sum_{n=0}^{\infty} \left(\frac{1}{(6n+1)^s}-\frac{4}{(6n+2)^s}+\frac{6}{(6n+3)^s}-\frac{4}{(6n+4)^s}+\frac{1}{(6n+5)^s}\right)}{1 -\frac{5}{2^s} + \frac{5}{3^s} - \frac{1}{6^s}}.$$
And the series representation from \eqref{eta} is 
$$\frac{\sum_{n=0}^{\infty}
\left(\frac{1}{(2n+1)^s}-\frac{1}{(2n+2)^s}\right)}{1 -\frac{2}{2^s}}.$$

In order to test the convergence rate of these series, we computed the summation
over $n$ in formula \eqref{mainformula} 
up to varying numbers of terms $N$. The choices of $N$ that we made 
were 
the minimum necessary to be within a prescribed desired accuracy of less than
$0.001$,$0.0001$ and $0.00001$. The minimum $N$ that achieved this 
is given in the displayed tables. 
Note, however, that our criterion for choosing $N$ could be
occasionally inconsistent. For example, for some $t$ there could be
$N$ that by chance brings the sum to within the prescribed accuracy.
This appears to be the case for $m=2$ when $t$ is small.
Nevertheless, one can still glean distinct patterns despite the occasional
apparent inconsistency.

\begin{table}[ht]
\centering
\footnotesize{
\begin{tabular}{ ||p{1.6cm}|p{1.6cm}||p{1.6cm}||p{1.6cm}||p{1.6cm}| }
 \hline
  $s=1/2+it$  & \multicolumn{4}{|c|}{Minimum $N$ necessary for error of magnitude $<0.001$} \\
 \hline
 $t$& $m=60$& $m=24$& $m=6$& $m=2$\\
 \hline
 $10^4$ &$2.4 \times 10^2$  & $4.5 \times 10^2$ & $2 \times 10^3$ & $7.5 \times 10^4$\\
 $10^5$&$1.7 \times 10^3$& $4.25 \times 10^3$ & $1.8 \times 10^4$ & $6.5 \times 10^4$\\
 $10^6$&$2.7 \times 10^4$ & $4.3 \times 10^4$ & $1.7 \times 10^5$ & $2.4 \times 10^5$\\
 $10^7$ &$2.7 \times 10^5$& $4.1 \times 10^5$ & $1.7 \times 10^6$ & $2.4 \times 10^6$\\
 \hline
\end{tabular}

    \vspace{3mm}

\begin{tabular}{ ||p{1.6cm}|p{1.6cm}||p{1.6cm}||p{1.6cm}||p{1.6cm}| }
 \hline
  $s=1/2+it$  & \multicolumn{4}{|c|}{Minimum $N$ necessary for error of
    magnitude $<0.0001$} \\
 \hline
 $t$& $m=60$& $m=24$& $m=6$& $m=2$\\
 \hline
 $10^4$&$3.2 \times 10^2$   & $8.1 \times 10^2$ & $2.9 \times 10^3$ & $7 \times 10^6$\\
 $10^5$&$2.7 \times 10^3$& $8 \times 10^3$ & $2.3 \times 10^4$ & $5.1 \times 10^6$\\
 $10^6$&$3.2 \times 10^4$ & $8 \times 10^4$ & $2.1 \times 10^5$ & $6 \times 10^6$\\
 $10^7$ &$3.2 \times 10^5$& $8 \times 10^5$ & $1.8 \times 10^6$ & $7.3 \times 10^7$\\
 \hline
\end{tabular}
 
    \vspace{3mm}

    \begin{tabular}{ ||p{1.6cm}|p{1.6cm}||p{1.6cm}||p{1.6cm}||p{1.6cm}| }
 \hline
  $s=1/2+it$  & \multicolumn{4}{|c|}{Minimum $N$ necessary for error of
     magnitude $<0.00001$} \\
 \hline
 $t$& $m=60$& $m=24$& $m=6$& $m=2$\\
    \hline
 $10^4$&$3.3 \times 10^2$   &  $8.8 \times 10^3$ & $5 \times 10^3$ & $7 \times 10^8$\\
 $10^5$&$3.2 \times 10^3$& $8.3 \times 10^3$ & $3.7 \times 10^4$ & $5.1 \times 10^8$\\
 $10^6$&$3.2 \times 10^4$ & $8.2 \times 10^4$ & $3.1 \times 10^5$ & $6 \times 10^8$\\
 $10^7$&$3.2 \times 10^5$ & $8 \times 10^5$& $2.4 \times 10^6$ & $7.3 \times 10^9$\\
 \hline
\end{tabular}
}
    \vspace{2mm}
    \caption{\small{Minimum number of terms needed for various $m$ and $s$ to
    be within an error of magnitude under $0.001, 0.0001$, and $0.00001$.}}
\end{table}

The $m=2$ case, the Dirichlet Eta Function, is known to converge on the critical
line with error term
of order $\frac{1}{\sqrt{N}}$; see \cite{hiary} for example. This is reflected
in the tables, as decreasing the error by a factor of $10$ (from $.001$ to
$.0001$, or from $.0001$ to $.00001$) for the same value of $t$ requires $100$ times as many terms. 
In comparison, for each of the $m=6$, $m=24$, and $m=60$ cases, the minimum $N$
needed does not increase nearly as fast as the prescribed accuracy is decreased.
In almost all cases, for a given prescribed accuracy, 
the number of terms needed scales approximately linearly with
$t$ (or with the magnitude of $s$). 

\begin{remark}
The number $N$ refers to the number of inner sums being added in the right-side of \eqref{mainformula}, and so is the upper limit of the summation over $n$. To get the total number of individual terms added one should multiply by $m$ (since each inner sum has $m$ terms).
\end{remark}

\subsection{Error analysis}
In this section, we will approximate the error resulting from truncating our
formula for $Z_m(s)$ at $n=N$.
Consider
\begin{equation}\label{firstterm}
\frac{f_{d(m)}(s)}{(mn + m/2)^{s+d(m)}}\sum_{k=1}^{m} b_k k^{d(m)}.
\end{equation}
As pointed out in the remark following Theorem~\ref{maintheorem},
this is the first nonzero term in the Taylor expansion used in the proof of the
theorem.
For  $\operatorname{Re}(s)>2-d(m)$, we use monotonicity to estimate 
\begin{equation}
    \begin{split}
        \int_{n=N}^{\infty} \frac{1}{(mn+m/2)^{\operatorname{Re}(s)+d(m)}} \mathop{dn} &<\sum_{n=N}^{\infty}
\frac{1}{(mn+m/2)^{\operatorname{Re}(s)+d(m)}} \\
        & <\int_{n=N-1}^{\infty} \frac{1}{(mn+m/2)^{\operatorname{Re}(s)+d(m)}} \mathop{dn}.
    \end{split}
\end{equation}
Therefore,
\begin{equation}
    \begin{split}
        \sum_{n=N}^{\infty} \frac{1}{(mn+m/2)^{\operatorname{Re}(s)+d(m)}} &<
 \frac{1}{m(\operatorname{Re}(s)+d(m)-1)(mN-\frac{m}{2})^{\operatorname{Re}(s)+d(m)-1}}
        \\
        &=: \mathcal{T}(s,N).
  \end{split}
\end{equation}
If the behavior of the error 
is mainly determined by \eqref{firstterm}, as expected, 
then 
the truncation error resulting from using $n=N$ terms in the formula for
$Z_m(s)$ -- that is the difference between the actual value of $\zeta(s)$ and
our truncated formula -- is approximately 
\begin{equation}\label{error1}
    \left|
    \mathcal{T}(s,N)\cdot \frac{\prod_{u=0}^{d(m)-1}(s+u)}{d(m)!} 
    \cdot \frac{\sum_{k=1}^{m} b_k k^{d(m)}}{\sum_{j=1}^{d(m)} a_j (d_j)^{-s}} \right|.
\end{equation}

We simplify the estimate \eqref{error1} some more by specializing to the
parameters used in our 
set of numerical experiments, 
which we conducted on the critical line and included
heights $t$ not too small. 
Since $|s|$ is significantly larger than $m$ in our experiments, it is reasonable to
approximate $|\prod_{u=0}^{d(m)-1}{(s+u)}| \approx |s|^{d(m)}$. Also, since we
are working on the critical line, we approximate
$|\mathcal{T}(s,N)|\approx (m(mN)^{d(m)-1/2})^{-1}$.
So, on the critical line, the estimate \eqref{error1} behaves like 
\begin{equation}\label{error3}
    \left|\frac{1}{m(mN)^{d(m)-1/2}} \cdot 
    \frac{|s|^{d(m)}}{d(m)!} 
\cdot \frac{\sum_{k=1}^{m} b_k k^{d(m)}}{\sum_{j=1}^{d(m)} a_j (d_j)^{-s}}
    \right|
\end{equation}

Using \eqref{error3}, 
we see why in our set of experiments with $m=6,24,60$, increasing $t$ tenfold
requires approximately tenfold increase in $N$ to maintain the same level of
accuracy. For increasing $t$ in such a way multiplies $|s|$ by approximately $10$,
which multiplies the numerator in \eqref{error3} by approximately $10^{d(m)}$.
On the other hand, increasing $N$ tenfold multiplies the
denominator by a factor of $10^{d(m)-1/2}$. 
For the same accuracy, then, it suffices to multiply $N$  
by about $10$. This behavior is clearly reflected in
 the tables from the prior subsection.

We can also interpret the rate of convergence observed in our experiments 
using the estimate \eqref{error3}. Generally, to improve the accuracy by a
multiplicative factor $1/\eta$, we need to multiply $N$ by a factor $\kappa$
such that $\kappa^{d(m)-1/2} = \eta$, so $\kappa =
\eta^\frac{1}{d(m)-1/2}$. 
Table~\ref{error_table_2} demonstrates 
that this expected rate of convergence is at work.
The entries in the table show this expected trend. In our experiments we have
$s=1/2+it$. For example, for $\eta=10$, we consider the corresponding $\kappa$'s
for various $m$. When $m=6$, we get $\kappa = 10^{2/7} \approx 1.93$. When
$m=24$, we get $\kappa = 10^{2/15} \approx 1.36$. And when $m=60$, we get
$\kappa = 10^{2/23} \approx 1.22$. We see that $\kappa$'s we find empirically
match these expected values fairly well. When $m=6$, we get $4.70/2.43 \approx 1.93$ in one case and  $2.43/1.27 \approx 1.91$ in another case. Similarly, when $m = 24$, we get $1.96/1.46 \approx 1.34$ in one case, and $1.46/1.14 \approx 1.28$ in another case. And when $m = 60$ we get $4.74/4.08 \approx 1.16$ and $4.08/3.61 \approx 1.13$.

\begin{table}[ht]
\centering
\footnotesize{
\begin{tabular}{ ||p{2.2cm}||p{2.2cm}||p{2.2cm}||p{2.2cm}|}
 \hline
    & \multicolumn{3}{|c|}{Minimum $N$ necessary to achieve a specified accuracy} \\
 \hline
 Accuracy & $m=60$ & $m=24$ & $m=6$\\
 \hline
 $10^{-6}$&$3.33 \times 10^3$&  $9.4 \times 10^3$ & $6.7 \times 10^4$\\
 $10^{-7}$&$3.61 \times 10^3$& $1.14 \times 10^4$ & $1.27 \times 10^5$\\
 $10^{-8}$&$4.08 \times 10^3$&$1.46 \times 10^4$ & $2.43 \times 10^5$\\
 $10^{-9}$&$4.74 \times 10^3$ & $1.96 \times 10^4$& $4.70 \times 10^5$\\
 \hline
\end{tabular}
}
\vspace{2mm}
    \caption{\small{
    Minimum $N$ to achieve a prescribed accuracy for $s =
    \frac{1}{2} + 10^5i$. The case $m=2$ was excluded 
    as its rate of convergence is known.}}
\label{error_table_2}
\end{table}

When compared to the Dirichlet Eta Function ($m=2$) case, representations with
higher $m$ converge much faster: for $m=2$, increasing precision by a factor of
$10$ requires increasing the number of terms by a factor of $100$, while for the
$m=60$ case, once $N$ is large enough, it appears to only require increasing the number of terms by a factor of about $1.22$ asymptotically. As a result, these series offer more efficient and fairly simple ways to compute the zeta function.

\printbibliography

@book{Apostol,
    AUTHOR = {Apostol, Tom M.},
     TITLE = {Introduction to analytic number theory},
    SERIES = {Undergraduate Texts in Mathematics},
 PUBLISHER = {Springer-Verlag, New York-Heidelberg},
      YEAR = {1976},
     PAGES = {xii+338},
   MRCLASS = {10-01 (10AXX 10HXX)},
  MRNUMBER = {0434929},
MRREVIEWER = {E. Grosswald},
}

@book{Titchmarsh,
    AUTHOR = {Titchmarsh, E. C.},
     TITLE = {The theory of the {R}iemann zeta-function},
   EDITION = {Second},
      NOTE = {Edited and with a preface by D. R. Heath-Brown},
 PUBLISHER = {The Clarendon Press, Oxford University Press, New York},
      YEAR = {1986},
     PAGES = {x+412},
   MRCLASS = {11M06},
  MRNUMBER = {882550},
MRREVIEWER = {Matti Jutila},
}

@Book{linalg,
  author =       "Lee W. Johnson and R. Dean Riess and Jimmy T. Arnold",
  title =        "Introduction to Linear Algebra",
  edition =      "Fifth Edition",
}

@article {hiary,
    AUTHOR = {Hiary, Ghaith A.},
     TITLE = {An alternative to {R}iemann-{S}iegel type formulas},
   JOURNAL = {Math. Comp.},
  FJOURNAL = {Mathematics of Computation},
    VOLUME = {85},
      YEAR = {2016},
    NUMBER = {298},
     PAGES = {1017--1032},
      ISSN = {0025-5718},
   MRCLASS = {11M06 (11Y16 68Q25)},
  MRNUMBER = {3434892},
MRREVIEWER = {P\'{e}ter K\'{o}rus},
       DOI = {10.1090/mcom/3019},
       URL = {https://doi-org.proxy.lib.ohio-state.edu/10.1090/mcom/3019},
}

@book {davenport-book,
    AUTHOR = {Davenport, Harold},
     TITLE = {Multiplicative number theory},
    SERIES = {Graduate Texts in Mathematics},
    VOLUME = {74},
   EDITION = {Third},
      NOTE = {Revised and with a preface by Hugh L. Montgomery},
 PUBLISHER = {Springer-Verlag},
   ADDRESS = {New York},
      YEAR = {2000},
     PAGES = {xiv+177},
      ISBN = {0-387-95097-4},
   MRCLASS = {11-02 (11-01 11Mxx 11Nxx)},
  MRNUMBER = {1790423 (2001f:11001)},
}

\end{document}